\theoremstyle{plain}      
\newtheorem{thm}{Theorem}[section]     
\newtheorem{theorem}[thm]{Theorem}     
\newtheorem{corollary}[thm]{Corollary}     
\newtheorem{lemma}[thm]{Lemma}     
\newtheorem{proposition}[thm]{Proposition}
\theoremstyle{remark}
\theoremstyle{definition}      
\newtheorem{definition}[thm]{Definition}
\def\Jac{{\mathop{\rm Jac}}}
\newcommand{\R}{\mathbb R}
\newcommand{\Arg}{\mathrm{Arg}\,}
\newcommand{\Log}{\mathrm{Log}\,}
\newcommand{\A}{\mathscr A}
\newcommand{\Ree}{\textrm{Re}}
\newcommand{\Ima}{\textrm{Im}}
\newcommand{\Critp}{{\operatorname{Critp}}}
\begin{document}

\title[Amoebas and coamoebas of  linear spaces]
  {Amoebas and coamoebas of linear spaces}

\author{Mounir Nisse and Mikael Passare}
\date{}

\address{School of Mathematics KIAS, 85 Hoegiro Dongdaemun-gu, Seoul 02455, Republic of Korea.}
\email{mounir.nisse@gmail.com}
\thanks{Research of  the first author is partially supported by NSF MCS grant DMS-0915245.}

%%%%%%%%%%%%%%%%%%%%%%%%%%%%%%%%%%%%%%%%%%%%%%%%%%%%%%%%%%%%%%%%%%%%%%%%%%%%
\subjclass[2010]{14T05, 32A60}
\keywords{Very affine linear spaces, amoebas, coamoebas, logarithmic Gauss map}
\maketitle

\begin{abstract}
We give a complete description of  amoebas and coamoebas of $k$-dimensional very affine linear
spaces in $(\mathbb{C}^*)^{n}$.  This include an  upper bound of their dimension, and we   show  that if a $k$-dimensional very affine linear space in  $(\mathbb{C}^*)^{n}$
 is generic, then the dimension of its (co)amoeba  is equal to $\min \{  2k, n\}$. Moreover, we prove that the volume of its  coamoeba is equal  to  $\pi^{2k}$. In addition,  if the space  is  generic and real, then the volume of its amoeba is equal to $\frac{\pi^{2k}}{2^k}$.
\end{abstract}

%%%%%%%%
%%%%%%%%
%%%%%%%%
%%%%%%%%

\section{Introduction}\label{sec:1}

Amoebas and coamoebas are very fascinating notions in mathematics, the first  has been introduced by I. M. Gelfand,
 M. M. Kapranov and A. V. Zelevinsky
in 1994  \cite{GKZ-94}, and the second  by the second author in a talk in  2004. They  are natural projections 
of complex varieties, and which turn out to have relations to several other fields: tropical geometry, real algebraic geometry, 
generalized hypergeometric functions, mirror symmetry, and others (e.g., \cite{IMS-07}, \cite{MS}, \cite{M1-04}, \cite{M2-04}, \cite{NS1-11}, \cite{PR-04}). More precisely, the amoebas (respectively  coamoebas) of  complex algebraic 
and generally analytic varieties in the complex algebraic torus $(\mathbb{C}^*)^n$ are defined as  their image under the logarithmic mapping 
$\Log :(z_1,\ldots ,z_n)\mapsto (\log |z_1| ,\ldots ,\log |z_n| )$ (respectively the argument mapping 
$\Arg : (z_1,\ldots ,z_n)\mapsto (\frac{z_1}{|z_1|},\ldots ,\frac{z_1}{|z_1|})$).  Amoebas (respectively coamoebas) are the link between  
classical complex algebraic geometry and  tropical (respectively complex tropical) geometry. More precisely,  amoebas  degenerate to
 piecewise-linear objects called tropical varieties (see \cite{M1-04}, and \cite{PR-04}), and comoebas degenerate to a non-Archimedean 
coamoebas which are  the coamoebas of some lifting in the complex algebraic torus of tropical varieties. See \cite{NS2-11} for more 
details about non-Archimedean coamoebas, and \cite{N2-09}  about this degeneration in case of hypersurfaces.
Whereas the theory of (co)amoebas of complex hypersurfaces is by now reasonably well understood (see e.g., \cite{FPT-00},  \cite{M3-00}, \cite{N2-09}, and \cite{PR-04},), much less is known about the structure of (co)amoebas coming from varieties of higher codimension. A natural first step in this direction is to explore the case of linear spaces.

Being of a logarithmic nature, it is natural that  coamoebas are closely related to the exponents of the defining functions of
 $V$, and to the associated Newton polytopes. This connection is extensively explored in the thesis of the first author \cite{GKZ-94}, \cite{N1-10}, and \cite{PR-04}. 
Another important connection is to the currently very active field of tropical geometry, a piecewise linear incarnation of classical 
algebraic geometry where the varieties can be seen as non-Archimedean versions of amoebas (see \cite{MS}, \cite{M2-04}, \cite{M1-04} and others).

\noindent A fundamental theorem was shown by K. Purbhoo \cite{P-08}   for the general study of amoebas   that do not come from 
hypersurfaces.  The theorem   states  that the amoeba of an algebraic variety $V$ is equal to the intersection of all hypersurface amoebas corresponding to functions in the defining ideal $\mathcal{I}(V)$ of the variety $V$. 
We give a simple proof of this theorem with an extension to coamoebas.

\vspace{0.2cm}

\begin{theorem}\label{Main A} Let $V\subset (\mathbb{C}^*)^n$ be an algebraic variety with defining ideal $\mathcal{I}(V)$. Then the amoeba (respectively
 coamoeba) of $V$ is given as follows:
$$
\mathscr{A}(V) = \bigcap_{f\in \mathcal{I}(V)} \mathscr{A}(V_f) \qquad\qquad  and \qquad\,\,\,\,  co\mathscr{A}(V) = 
\bigcap_{f\in \mathcal{I}(V)} co\mathscr{A}(V_f) .
$$
\end{theorem}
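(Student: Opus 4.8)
The plan is to prove the two inclusions separately. The inclusion $\A(V)\subseteq\bigcap_{f}\A(V_f)$ is elementary, while the reverse inclusion carries all the content; the argument for coamoebas will run in parallel with the argument for amoebas, so I would treat both at once. For the easy inclusion I would simply observe that every $f\in\mathcal{I}(V)$ vanishes identically on $V$, whence $V\subseteq V_f$ and therefore $\Log(V)\subseteq\Log(V_f)$, i.e. $\A(V)\subseteq\A(V_f)$; intersecting over all $f$ gives the claim, and the identical reasoning with $\Arg$ in place of $\Log$ handles $\coA$.

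For the reverse inclusion I would argue by contraposition: starting from a point $w\notin\A(V)$ I must produce a \emph{single} $f\in\mathcal{I}(V)$ with $w\notin\A(V_f)$, since this suffices to place $w$ outside the intersection. Fix the fibre $T_w=\{z\in(\C^*)^n:\Log(z)=w\}$, a real $n$-torus on which each modulus $|z_i|=e^{w_i}=:\rho_i$ is constant. Then $w\in\A(V_f)$ is equivalent to $V_f\cap T_w\neq\emptyset$, while $w\notin\A(V)$ says exactly $V\cap T_w=\emptyset$. The key observation is that on $T_w$ one has $\overline{z_i}=\rho_i^2\,z_i^{-1}$, so complex conjugation restricts to a substitution by Laurent monomials: for any Laurent polynomial $g$, the restriction of $\overline{g}$ to $T_w$ agrees with the restriction of a genuine Laurent polynomial $\widetilde g$, obtained from $g$ by conjugating its coefficients and replacing each $z_i$ by $\rho_i^2 z_i^{-1}$.

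I would then invoke the Noetherian property of the Laurent polynomial ring of $(\C^*)^n$ to write $\mathcal{I}(V)=\langle g_1,\dots,g_r\rangle$, so that $V=\bigcap_l V_{g_l}$ and $V\cap T_w=\emptyset$ means that $g_1,\dots,g_r$ have no common zero on $T_w$. Setting $f=\sum_{l=1}^{r} g_l\,\widetilde g_l$, which lies in $\mathcal{I}(V)$ because each $g_l$ does and $\mathcal{I}(V)$ is an ideal, I obtain that $f$ restricted to $T_w$ equals $\sum_l |g_l|^2>0$; hence $f$ has no zero on $T_w$, that is $V_f\cap T_w=\emptyset$ and $w\notin\A(V_f)$, as required. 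For coamoebas the same scheme applies after replacing $T_w$ by the argument fibre $\{z:\Arg(z)=\vartheta\}$, on which $\overline{z_i}=\zeta_i^{-2}z_i$ with $\zeta_i=e^{i\vartheta_i}$ fixed; conjugation is again a Laurent-monomial substitution, and the very same $f=\sum_l g_l\widetilde g_l$ is strictly positive on the fibre.

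The main obstacle is the reverse inclusion, and within it the crucial point is the passage from finitely many equations with no common solution on a single fibre to one equation with no solution there. The trick that makes this work is that conjugation becomes algebraic once the fibre is fixed, which turns the real-analytic sum $\sum_l |g_l|^2$ into the restriction of an honest element of the ideal. I would take care to carry out the whole argument inside the Laurent polynomial ring of $(\C^*)^n$, so that the auxiliary factors $\widetilde g_l$ remain in $\mathcal{I}(V)$ and the monomial substitution introduces no spurious zeros on the torus.
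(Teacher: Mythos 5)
Your proposal is correct and follows essentially the same route as the paper: the easy inclusion from $V\subseteq V_f$, and for the converse the observation that on a fixed $\Log$- (or $\Arg$-) fibre conjugation is a Laurent-monomial substitution, so that $G=\sum_l g_l\widetilde g_l$ built from generators of $\mathcal{I}(V)$ lies in the ideal and restricts to $\sum_l|g_l|^2>0$ on that fibre. This is precisely the construction in the paper's Lemma on $\bigcap_f\mathscr{S}_f$ and in the coamoeba half of its proof.
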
 

 In  \cite{PR-04}, Rullg\aa rd  and the second author  showed that the area of  complex plane curve amoebas is finite and the bound is given in terms of the Newton polygon of the defining polynomial. They,  also compute the area of the amoeba of a plane line. It was shown by  Mikhalkin and Rullg\aa rd that this bound is always sharp \cite{MR-00}. In \cite{MN1-11},
 Madani and the first author  generalized this result and showed  that the volume of the amoeba of  a  $k$-dimensional algebraic 
variety of $(\mathbb{C}^*)^n$  with $n\geq 2k$ is  finite. Moreover, they proved in \cite{MN2-11} that the finiteness of the volume of
 the amoeba of  a generic analytic variety is equivalent to  the variety being  algebraic.  Theorem \ref{Main A} and Proposition \ref{theorem dim amoeba} was shown separately and in the same time by Petter Johansson in \cite{J-10}.

\vspace{0.2cm} 
 
Let $V$ be a variety in the projective space $\mathbb{CP}^n$. We choose homogeneous coordinates $[Z_0:\cdots :Z_n]$ so that $V$ is transverse to coordinate hyperplanes $Z_j=0$ and all their intersections. The complement of the arrangement of  coordinate hyperplanes in  $\mathbb{CP}^n$ is $(\mathbb{C}^*)^n$. Then the variety  $\mathscr{V} = V\cap (\mathbb{C}^*)^n$ is called  a {\em very affine variety}, and in the case where $P(k)$ is a $k$-dimensional linear subspace of $\mathbb{CP}^n$ we say that $\mathscr{P}(k) = P(k)\cap (\mathbb{C}^*)^n$ is a {\em very affine linear space}, and by abuse of language we will call it just affine linear space. Moreover, $\mathscr{P}(k)$ can be presented as a
complete intersection of hyperplanes given by first degree equations $f_1(z)=\cdots =f_{n-k}(z)=0$, where $z=(z_1,\ldots ,z_n) = (Z_1/Z_0,\ldots ,Z_n/Z_0)$ stands for the affine coordinates in $(\mathbb{C}^*)^n$.

\vspace{0.2cm}
 
\begin{theorem}\label{main theorem6}
Let $\mathscr{P}(k)$ be a generic affine linear subspace of $(\mathbb{C}^*)^{2k}$. Then we have the following:
\begin{itemize}
\item[(i)]\, The volume of the coamoeba $co\mathscr{A}(\mathscr{P}(k))$ is equal to $\pi^{2k}$;
\item[(ii)]\,  Moreover, if $\mathscr{P}(k)$ is real, then the volume of its amoeba  $\mathscr{A}(\mathscr{P}(k))$ is equal to 
$\frac{\pi^{2k}}{2^k}$.
\end{itemize}
\end{theorem}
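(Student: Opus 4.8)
The plan is to realise both volumes as integrals of the Jacobian of an argument-, respectively logarithm-, map over the parameter space $(\mathbb{C}^*)^k$, and to exploit that these two Jacobians agree pointwise. Write $t_i=r_ie^{i\theta_i}$, put $u_i=\log r_i$ and $v_i=\theta_i$, and let $F_j$ be a local holomorphic branch of $\log f_j(t)$, so that $\psi_j=\Ima F_j$ and $\log|f_j|=\Ree F_j$. In the coordinates $(u,v)$ the coamoeba is the image of $C:(u,v)\mapsto(v_1,\dots,v_k,\Ima F_1,\dots,\Ima F_k)$ and the amoeba is the image of $A:(u,v)\mapsto(u_1,\dots,u_k,\Ree F_1,\dots,\Ree F_k)$. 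Since each $F_j$ is holomorphic in $\zeta_l=u_l+iv_l$, the Cauchy--Riemann relations give $\partial(\Ima F_j)/\partial u_l=-\partial(\Ree F_j)/\partial v_l$; as the first $k$ rows of $DC$ and of $DA$ are $(0\ I)$ and $(I\ 0)$ respectively, expanding along them yields the pointwise identity $|\det DC|=|\det DA|=|\det(\Ima(a_{jl}t_l/f_j))_{jl}|$. This identity is what ties parts (i) and (ii) together.

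For (i) I would first check that $C$ is generically one-to-one. Fixing the output angles $(\bar v,\bar\psi)$, a preimage is an $r\in(\mathbb{R}_+^*)^k$ with $\arg t_i=\bar v_i$ and $\Ima(e^{-i\bar\psi_j}f_j(t))=0$; the latter are $k$ equations that are \emph{linear} in $r$, so by the maximal-rank (genericity) hypothesis they have a unique solution, which is admissible exactly over the interior of $co\mathscr{A}$. Hence the area formula gives $\mathrm{vol}(co\mathscr{A})=\int|\det DC|\,du\,dv$, and fibering over the $v$-torus turns this into $\int_{(S^1)^k}\mathrm{Area}_v\,dv$, where $\mathrm{Area}_v=\int_{(\mathbb{R}_+^*)^k}|\det(\Ima(a_{ji}e^{iv_i}t_i/f_j))_{ji}|\,dr$ is the area of the argument-image of the simplicial cone $r\mapsto(f_1(t),\dots,f_k(t))$. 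The main obstacle is to evaluate this and show it equals $\pi^{2k}$. I expect to carry this out by induction on $k$, peeling off the last variable as in the proof of Theorem 2.1 and reducing to the one-variable computation, where the inner $r$-integration produces $\mathrm{Area}_v$ equal to the angle swept by the cone and $\int_0^{2\pi}$ of it returns $\pi^2$; the inductive step should reproduce one further factor of $\pi^2$, giving $\pi^{2k}$.

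For (ii) I would combine the identity of the first paragraph with a fibre count for the amoeba map of a real space. By the area formula $\int|\det DA|=\int_{\mathscr{A}}\#A^{-1}(x)\,dx$, and by (i) the left-hand side equals $\int|\det DC|=\pi^{2k}$; it therefore remains to prove that for a real $\mathscr{P}(k)$ the generic fibre of $A$ consists of exactly $2^k$ points, for then $\#A^{-1}\equiv 2^k$ almost everywhere on $\mathscr{A}$ forces $\mathrm{vol}(\mathscr{A})=\pi^{2k}/2^k$. Fixing $r$, such a fibre is the set of $v\in(S^1)^k$ solving $|f_j(t)|^2=\mathrm{const}_j$, a system of $k$ real equations; it has $2^k$ solutions in the decomposable model $\prod_l\{\text{real line}\}$, where $A$ is a product of $k$ two-to-one maps. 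The delicate point, and the main difficulty of (ii), is that, unlike the coamoeba fibre, this fibre is cut out by equations that are not linear in $v$, so one must show that the generic cardinality is constant and equal to $2^k$ for all real coefficients rather than merely bounded by it. As a consistency check, for $k=1$ the amoeba volume integral evaluates directly to $\int_0^\infty\log\frac{1+r}{|1-r|}\,\frac{dr}{r}=\pi^2/2$.
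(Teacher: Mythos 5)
Your overall framework --- the area formula, the pointwise identity $|\det DC|=|\det DA|$ coming from the Cauchy--Riemann equations, and a fibre count --- is sound, and your observation that the coamoeba fibre is cut out by equations linear in $r$, hence that $C$ is generically injective, is essentially the paper's Proposition 6.1(i). But both of the steps you defer are exactly where the content of the theorem lies, and as written they are genuine gaps. For (i), you reduce the claim to evaluating $\int|\det DC|\,du\,dv$ and propose an induction on $k$ ``peeling off the last variable'' without justifying the inductive step; since every $f_j$ depends on all $k$ parameters, fixing $t_{k+1}$ does not split the integral, and nothing in your sketch produces the claimed extra factor of $\pi^2$. The paper never evaluates this integral. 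Instead it uses a tiling argument: for each sign vector $s\in\mathbb{Z}_2^{2k}=\{\pm1\}^{2k}\subset(\mathbb{C}^*)^{2k}$ it considers the translate $\mathscr{P}_s(k)=s\cdot\mathscr{P}(k)$ and shows --- by the same full-rank $2k\times 2k$ real linear system that gives you injectivity, but now solved over $(\mathbb{R}^*)^{2k}$ rather than $(\mathbb{R}_+^*)^{2k}$ --- that the $2^{2k}$ coamoebas $co\mathscr{A}(\mathscr{P}_s(k))$ have pairwise disjoint sets of regular values whose union is dense in $(S^1)^{2k}$. Since these coamoebas are isometric to one another, each has volume $(2\pi)^{2k}/2^{2k}=\pi^{2k}$. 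This symmetry is the key idea your proposal is missing, and without it part (i) is not proved.

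For (ii) you correctly isolate the remaining issue --- that for a real $\mathscr{P}(k)$ the generic fibre of $\Log$ has exactly $2^k$ points --- but you leave it open, flagging only that the fibre equations are not linear in $v$. The paper closes this in two halves. For the upper bound it conjugates by the invertible real matrix $A=(a_{ji})$, which maps $\mathscr{P}(k)$ to the split model $(T_1,\dots,T_k,b_1+T_1,\dots,b_k+T_k)$, a product of $k$ plane lines whose $\Log$ is visibly $2^k$-to-one; realness of $A$ guarantees that this conjugation preserves the critical locus of $\Log$ (purely imaginary tangent vectors go to purely imaginary tangent vectors), so $\mathscr{P}(k)$ has the same number $2^k$ of regular components, and injectivity of the argument map on regular points then caps the fibre cardinality at $2^k$. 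The matching lower bound is imported from the Madani--Nisse result bounding $\#\Log^{-1}(x)$ below by $2^l$ where $l=\dim(\mathscr{P}(k)\cap\overline{\mathscr{P}(k)})$, which equals $k$ in the real case. You would need both halves to make your constancy claim, and hence the value $\pi^{2k}/2^k$, rigorous.
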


\vspace{0.2cm} 

The present paper is organized as follows. We give definitions, background, and some known results in connection with this paper in Section \ref{sec:2}. We prove Theorem \ref{Main A} in Section \ref{sec:3},
and detailed description of  amoebas and coamoebas  of lines in $n$-dimensional complex  algebraic torus  in Section \ref{sec:4} for any $n\geq 2$.   We prove  Theorem \ref{main theorem6} in Section \ref{sec:5}.

\vspace{0.2cm}

\noindent {\bf Remark.} My first meeting and mathematical discussion with Michael was during the summer school in Paris in 2006 where he gave a series of lectures on amoebas. We talked a lot on the geometric and topological properties of 
these objects in particular the solidness  of some of them. Moreover,  at Stokholm University, when I visited him in the same year, we discuss their similarity to other objects called coamoebas. At that time we do not know exactly what kind of similarities because the ambient  spaces of  these two objects are  different one is  compact and the other is no compact.
Amoebas are closed subsets in the Euclidean  space but coamoebas are not closed and not open subsets of the real torus. However,  both of them have a similar (dual in some sense) combinatorial properties, and strongly related to the combinatorial type of the Newton polytopes of the defining polynomial in  the  hypersurface case. At that time we do not know a lot of things in higher codimension.  This work was started on June  2011, but after the tragic death of Mikael Passare on 15 September 2011, the completion and writing of this paper was done by the first author.

%%%%%%%%
%%%%%%%%
%%%%%%%%
%%%%%%%%

\section{Preliminaries}\label{sec:2}

In this section, we review some known results related to this paper, and  give some notations and definitions. Let $V$ be an algebraic variety in $(\mathbb{C}^*)^n$. The {\em amoeba}  $\A$ of $V$ is by definition the image of $V$ under the logarithmic map  defined as follows (see M. Gelfand, M.M. Kapranov
 and A.V. Zelevinsky \cite{GKZ-94}):
\[
\begin{array}{ccccl}
\Log&:&(\mathbb{C}^*)^n&\longrightarrow&\mathbb{R}^n\\
&&(z_1,\ldots ,z_n)&\longmapsto&(\log |z_1|,\ldots ,\log|
z_n|).
\end{array}
\]

The argument map is the map  defined as follows:
\[
\begin{array}{ccccl}
\Arg&:&(\mathbb{C}^*)^n&\longrightarrow&(S^1)^n\\
&&(z_1,\ldots ,z_n)&\longmapsto&(\frac{z_1}{|z_1|},\ldots ,\frac{z_1}{|z_1|}).
\end{array}
\]
 The {\em coamoeba} of $V$,  denoted by $co\mathscr{A}$, is its image under the argument map (defined  by the second author in 2004). 

\vspace{0.1cm}

Purbhoo shows that the amoeba of an algebraic variety $V$ is equal to the intersection of all hypersurface amoebas corresponding to functions in the defining ideal $\mathcal{I}(V)$ of the variety $V$ (see \cite{P-08}, Corollary 5.2).
Passare and Rullg\aa rd  prove the following (see \cite{PR-04}):

\begin{theorem}[Passare-Rullg\aa rd,(2000)]\label{Passare-Rullgard}
 Let $f$ be a Laurent polynomial in two variables. Then the area of the amoeba of an algebraic plane  curve  with defining polynomial $f$ is not greater than $\pi^2$  times the area of the Newton polytope of $f$.
\end{theorem}

\vspace{0.05cm}

In \cite{MR-00}, Mikhalkin and Rullg\aa rd showed that up to multiplication by a constant in $(\mathbb{C}^*)^2$,  the algebraic plane curves with Newton polygon $\Delta$ with  maximal amoeba area are defined over $\R$. Furthermore,  their real loci are isotopic to the so-called Harnack curves (possibly singular with ordinary real isolated double points). Moreover,  Rullg\aa rd  and the second author compute the area of the amoeba of a line in the plane.

\vspace{0.1cm}

Madani and the first author showed that if the dimension $n$ of the ambient space  is at least the double of the dimension of $V$ (i.e., $n\geq 2\dim_{\mathbb{C}} (V) = 2k$), then the map  $\Log\circ\Arg^{-1}$ conserves the $2k$-volume, i.e., the absolute value of the determinant of its Jacobian, when it exists, is equal to one (see  \cite{MN2-11},  Proposition 3.1). Moreover, the same proposition shows that the set of critical points of the logarithmic and the argument maps restricted to $V$ coincide.
Hence, if the argument map restricted to the set of  regular points is injective, 
  and the cardinality  $d$ of  the inverse image under the logarithmic map of 
a regular value  in the amoeba is constant, then the volume of  the amoeba  will be the volume of the coamoeba divided  by $d$. So, first  we show that if $V$ is a generic $k$-dimensional linear space in $(\mathbb{C}^*)^{2k}$, then the argument map restricted to the set of  regular points is injective, and we compute the volume of its coamoeba.  Moreover, if the linear space is real,  we show  that  the cardinality of the inverse image under the logarithmic map of a regular value in the amoeba is constant and equal to $2^k$.   Finally, we compute the amoeba volume using the conservation of the volume by the map $\Log\circ\Arg^{-1}$.

%\vspace{0.2cm}

%%%%%%%%
%
%  Gauss Map
%
%%%%%%%%

\vspace{0.2cm}

\hspace{0.1cm}  In the following paragraph, we will recall the definition of the logarithmic Gauss map for hypersurface, and its generalization.  We will present some known relations between this map and (co)amoebas. Let  $V\subset (\mathbb{C}^*)^n$ be an algebraic  hypersurface with defining polynomial $f$, and denote by $V_{reg}$ the subset of its smooth points . The
  {\em logarithmic Gauss map} of the hypersurface $V$ is the holomorphic map defined by (see Kapranov \cite{K-91}):
\begin{eqnarray*}
\gamma :&V_{reg}&\longrightarrow\,\,\,\mathbb{CP}^{n-1}\\
&z&\longmapsto\,\,\, \gamma (z) =  [v (z)],
\end{eqnarray*}

\noindent where  $[v(z)]= [z_1\frac{\partial f}{\partial z_1}(z):\ldots :  z_n\frac{\partial f}{\partial z_n}(z)]$ denotes the class of the vector $v(z)$ in $\mathbb{CP}^{n-1}$.

%\vspace{0.2cm}

\hspace{0.1cm} Madani and the first author generalize this map to any codimension, and extract  some  relations between the set of its critical points and (co)amoebas,  and  they generalized an earlier result of Mikhalkin \cite{M3-00} on critical points of the logarithmic map (see \cite{MN3-12}). More precisely, let 
 $V\subset (\mathbb{C}^*)^n$ be an algebraic variety of dimension $k$ with defining ideal $\mathcal{I}(V)$ 
generated by $\{ f_1,\ldots ,f_l\}$. A holomorphic map $\gamma_G$ from the set of smooth points of $V$ to the complex Grassmannian $\mathbb{G}_{n-k,\, n}$  was defined  as follows:
If we  denote by $V_{reg}$ the subset of smooth points of $V$ as before, and $M(l\times n)$ denotes the set of $l\times n$ matrices. Let 
$g_G$ be the following map:
\[
\begin{array}{ccccl}
g_G&:&V_{reg}&\longrightarrow&M(l\times n)\\
&&z=(z_1,\ldots ,z_n)&\longmapsto&
\left( \begin{array}{ccc} z_1\frac{\partial f_1}{\partial z_1}(z) & \ldots & z_n\frac{\partial f_1}{\partial z_n}(z) \\ 
\vdots & \vdots& \vdots \\ 
 z_1\frac{\partial f_l}{\partial z_1}(z)& \ldots &  z_n\frac{\partial f_l}{\partial z_n}(z)
 \end{array} \right) .
\end{array}
\]
 Since $z$ is a smooth point of $V$, then  the complex vector space $L_z$ generated by the rows of  the matrix $g_G(z)$ is of dimension $n-k$,
 and orthogonal to the tangent space to $V$ at $z$.   Indeed, the problem is local and $V_{reg}$ is locally a complete intersection. Moreover, the tangent space to $V$ at a regular point is contained in the tangent space of all the  hypersurfaces  defined by the polynomials $f_i$, and each row  vector of index $i$ is orthogonal to the  hypersurface defined by the polynomial $f_i$ which  contains $V$.
 This means that the image of $V_{reg}$ by $g_G$ is contained in the subvariety of 
$M(l\times n)$ consisting of $l\times n$ matrices of rank $n-k$, which we map to the complex Grassmannian $\mathbb{G}_{n-k,\, n}$.  Composing this identification with $g_G$
  we obtain the desired  map:
  $$
  \gamma_G : V_{reg}\rightarrow \mathbb{G}_{n-k,\, n}
  $$ 
   called the {\em generalized logarithmic Gauss map}.

%\vspace{0.2cm}

If $V\subset (\mathbb{C}^*)^n$ is a hypersurface,  Mikhalkin showed that the set of critical points of $\Log_{|V}$ coincides with 
$\gamma^{-1}_G (\mathbb{RP}^{n-1})$ (see Lemma 3 in \cite{M3-00}, and Lemma 4.3 in \cite{M2-04}). This result was generalized by Madani and the first author for higher codimension in \cite{MN3-12}. 
%%%%%%%%
%
%  
%
%%%%%%%%

Throughout all this paper, the genericity  of an algebraic variety $V\subset (\mathbb{C}^*)^n$ is defined as follows:

%\newpage

\begin{definition}
An irreducible   algebraic variety  $V\subset (\mathbb{C}^*)^n$  of dimension $k$ is generic if 
it satisfies the following: 
\begin{itemize}
\item[(1)]\, The variety $V$ contains an open dense subset $U$ such that the Jacobian   of the restriction to  $U$   of the logarithmic map   $\Jac (\Log_{| U})$ has maximal rank i.e., $\min \{ 2k, n\}$; 
\item[(2)]\, The variety $V$ lies in no affine subgroup,   otherwise we may replace $(\mathbb{C}^*)^n$ by the smallest affine subgroup containing $V$. 
\end{itemize}
\end{definition}

%%%%%%%%
%%%%%%%%
%%%%%%%%
%%%%%%%%

\vspace{0.2cm}

We denote by  $\mathscr{L}og_{|V}$ the complex logarithmic map, and $\Ree$ the real part of a complex vector. In this case, we have  
$\Log_{|V} = \Ree \circ \mathscr{L}og_{|V}$. This means that the amoeba of $V$ is the real part of $\mathscr{L}og_{|V}(V)$ (by taking the imaginary part we obtain the same  conclusion for the coamoeba),
\begin{equation}
\xymatrix{
V\subset (\mathbb{C}^*)^n\ar[rr]^{\mathscr{L}og_{|V}}\ar[dr]_{\Log_{|V}}&&\mathbb{C}^n  \supset \mathscr{L}og_{}(V)\ar[ld]^{\Ree}     
\cr
&\mathscr{A}(V)\subset \mathbb{R}^n.
}\nonumber
\end{equation}
We can check that for any $r\in \mathbb{R}^n$, the set $T_r := \Log^{-1}(r)$ is a $n$-dimensional real torus, and $r\in \mathscr{A}(V)$ if and only if $T_r\cap V\ne\emptyset$.

%%%%%%%%
%%%%%%%%
%%%%%%%%
%%%%%%%% 

\section{(Co)amoebas of complex algebraic varieties}\label{sec:3}

%\vspace{0.3cm}

In this section, we describe the amoeba (respectively coamoeba) of a complex variety  $V$ with defining ideal $\mathcal{I}(V)$ as the intersection of
 the amoebas (respectively coamoebas) of the  complex hypersurfaces with defining polynomials in $\mathcal{I}(V)$. 

%\vspace{0.2cm}

The first part of Theorem \ref{Main A} concerning  amoebas was shown by    Purbhoo  in 2008 (see  Corollary 5.2 in \cite{P-08}). We present  a very simple proof of this fact, and extend it to coamoebas.

\vspace{0.2cm}

Our first observation, is the following proposition about the dimension of (co)amoebas:

\begin{proposition}\label{theorem dim amoeba}
 Let   $V\subset (\mathbb{C}^*)^{n}$ be an irreducible algebraic variety of dimension  $k$. Then, the dimension of the (co)amoeba $\mathscr{A}(V)$ of  $V$ satisfies the following:
$$
 \dim ((co)\mathscr{A}(V)) \leq  \min \{  2k , n   \} .
$$
In particular, if $V$ is generic, then the dimension of its amoeba  is  equal to $\min \{  2k , n   \}$.
\end{proposition}

\begin{proof}
The rank of the Jacobian of the logarithmic (respectively argument)  map restricted to  $V$
at a  regular point is equal to $\min \{  2k , n   \}$. So, the dimension of the (co)amoeba  cannot  exceed $\min \{  2k ,  n  \}$. Moreover,  if the dimension of the amoeba (respectively  coamoeba) of a $k$-dimensional  irreducible variety $V$ in $(\mathbb{C}^*)^n$ is strictly less than $\min \{  2k , n\}$, then the  map $\Ree$ is not an immersion (respectively submersion) if $n\geq 2k$ (respectively $n< 2k$). Hence,
the set of critical points of the  logarithmic   (respectively argument) map is equal to all the variety (see \cite{MN3-12} for more details about critical values of the logarithmic  Gauss map in higher codimension case). 
\end{proof}

 Let $V_f\subset (\mathbb{C}^*)^n$ be a hypersurface with defining polynomial $f$. Then, by definition, the amoeba
 of $V_f$ is the image by the logarithmic map of the subset $\mathscr{S}_f$ of $(\mathbb{R}_+^*)^n$ defined as follows:
$$
\mathscr{S}_f := \{  (x_1,\ldots ,x_n)\in (\mathbb{R}_+^*)^n |\, \exists \, z\in (\mathbb{C}^*)^n \,\textrm{such that} \,   x_i=|z_i|,\, \textrm{and}\, \, f(z)=0\} .
$$
Since  $\mathscr{L}og : (\mathbb{R}_+^*)^n \rightarrow \mathbb{R}^n$ is a diffeomorphism, we have the following:
$$
 \bigcap_{f\in \mathcal{I}(V)} \Log (\mathscr{S}_f)   =  \Log \left( \bigcap_{f\in \mathcal{I}(V)} \mathscr{S}_f \right),
$$
where $\Log (\mathscr{S}_f)$ is used with abuse of notation.
\begin{lemma}\label{lemma A} We have the following equality:
$$
\bigcap_{f\in \mathcal{I}(V)} \mathscr{S}_f 
= 
\{  (x_1,\ldots ,x_n)\in (\mathbb{R}_+^*)^n |\, \, x_i = |z_i|,\,\, {\rm and} \, \, (z_1,\ldots ,z_n)\in V \}.
$$
\end{lemma}

\begin{proof} Let $r$ be in 
 $$
 (\mathbb{R}_+^*)^n\setminus\{  (x_1,\ldots ,x_n)\in (\mathbb{R}_+^*)^n | \, x_i=|z_i|\, \, {\rm and} \, \,  (z_1,\ldots ,z_n)\in V \} ,
 $$
  and $T_r$ be the real torus $\Log^{-1}(r)$. So, $T_r\cap V$ is empty. Let $f\in \mathcal{I}(V)$ with $f(z)= 
\sum c_{\alpha}z^{\alpha}$ and $g$ be the Laurent polynomial defined by $g(z) = \sum \overline{c}_{\alpha}w^{\alpha}$ with
 $w=(\frac{r_1^2}{z_1},\ldots , \frac{r_n^2}{z_n})$ where the $r_j$'s are the coordinates of $r$, and $\overline{c}_{\alpha}$
 denotes the conjugate of the coefficient $c_{\alpha}$. The value of the 
  Laurent polynomial $h(z) = f(z)g(z)$  is equal to the value of $|f(z)|^2$  for  every  $z\in T_r$.
 By construction, the hypersurface $V_h$  with defining polynomial $h$ contains $V$ (because 
$h\in \mathcal{I}(V)$).
Let $\langle f_1,\ldots ,f_s\rangle$   be a set of generators of the ideal $\mathcal{I}(V)$, and  for any $j$ let $g_j$ be  the Laurent polynomial
 defined as before.  We can check  
 the hypersurface defined by the polynomial $ G=\sum f_jg_j$ contains $V$ and does not intersect the torus  $T_r$. This proves that
 $r\in  (\mathbb{R}_+^*)^n \setminus\bigcap_{f\in \mathcal{I}(V)} \mathscr{S}_f $. Hence, we have the inclusion:
$$
\bigcap_{f\in \mathcal{I}(V)} \mathscr{S}_f \subset \{  (x_1,\dots ,x_n)\in (\mathbb{R}_+^*)^n\, | \, x_i=|z_i|,\,\, {\rm and} \,\, 
( z_1,\ldots ,z_n)\in V \} .
$$
Now let $(x_1,\dots ,x_n)\in (\mathbb{R}_+^*)^n$ such that $x_i=|z_i|$ and $( z_1,\ldots ,z_n)\in V$, then  for all $f\in \mathcal{I}(V)$ we have   $f( z_1,\ldots ,z_n)=0$. This means that $(x_1,\dots ,x_n)\in \bigcap_{f\in \mathcal{I}(V)} \mathscr{S}_f$.
\end{proof}

%\vspace{0.2cm}

 \noindent {\it Proof of Theorem \ref{Main A}.}  \,  The first equality of Theorem \ref{Main A} is a consequence of  Lemma \ref{lemma A}. In fact, by applying the logarithmic map to both sides of the equality of Lemma \ref{lemma A}
we obtain: $ \Log \left( \bigcap_{f\in \mathcal{I}(V)} \mathscr{S}_f \right)  = \mathscr{A}(V)$, 
and then 
$$
 \mathscr{A}(V)  = \bigcap_{f\in \mathcal{I}(V)} \mathscr{A}(V_f).
$$

Let us prove the  second equality of Theorem \ref{Main A}.  Let $w\in \bigcap_{f\in 
\mathcal{I}(V)} co\mathscr{A}(V_f) $, then there exists  a fundamental domain $\mathscr{D} = ([a;  a+2\pi [)^n$ in the universal 
covering of the real torus $(S^1)^n$ and  a unique $\widetilde{w}\in \mathscr{D}$ such that $w =\exp (i\widetilde{w})$. In this domain,
 the exponential map is a diffeomorphism between $\mathscr{D}$ and $(S^1)^n\setminus (S^1)^{n-1} \wedge\ldots \wedge (S^1)^{n-1}$ where $(S^1)^{n-1} 
\wedge\ldots \wedge (S^1)^{n-1}$ denotes  the bouquet of $n$  tori of dimension $n-1$. Let us define the subset $co\mathscr{S}_f$ of $\mathscr{D}$ as follow:
$$
co\mathscr{S}_f := \{ \theta\in \mathscr{D} | \,\, {\rm there \,\, exists}\,\,   z\in V_f\,\, {\rm and}\,\, \exp(i\theta) = \Arg (z)\} .
$$
So, we have:
$$
\bigcap_{f\in \mathcal{I}(V)}  \exp \left(  i co\mathscr{S}_f\right) = \exp \left(   i \bigcap_{f\in \mathcal{I}(V)} 
co\mathscr{S}_f\right)
$$
because the exponential map is a diffeomorphism from  $\mathscr{D}$ into its image. Moreover, $\widetilde{w}$ is contained in the intersection
 $\bigcap_{f\in \mathcal{I}(V)} co\mathscr{S}_f$. But the last intersection, using the same argument  as in Lemma \ref{lemma A}, can be described as
 follows:
\begin{eqnarray}
\bigcap_{f\in \mathcal{I}(V)} co\mathscr{S}_f &=&   \bigcap_{f \in \mathcal{I}(V)}  \{   \theta \in \mathscr{D}|\, \, {\rm there \,\, exists}\,\,   z\in V_f\,\, {\rm and}\,\, \exp(i\theta) = \Arg (z)  \}  \nonumber\\
&=&  \{   \theta \in \mathscr{D}|\, \, {\rm there \,\, exists}\,\,   z\in V\,\, {\rm and}\,\, \exp(i\theta) = \Arg (z)  \}.\nonumber
\end{eqnarray}
Indeed, to prove the last equality,  let $e^{i\theta} \notin co\mathscr{A}(V)$, and for each generator $f_j(z)=\sum c_{\alpha}z^{\alpha}$ of $\mathcal{I}(V)$ we 
define the polynomial $g_j$ as follows:
$$
g_j(z)=\sum \overline{c}_{\alpha}(e^{-2i\theta})^{\alpha}z^{\alpha}.
$$
If $z\in \Arg^{-1}(e^{i\theta})$,  then we have $f_jg_j(z)= |f_j(z)|^2$.  The polynomial 
$G=\sum_jf_jg_j$ is in $\mathcal{I}(V)$,  but $e^{i\theta} \notin co\mathscr{A}(V_G)$ because $|f_j(z)|^2>0$ and hence $G(z)=\sum_jf_jg_j(z)>0$ for every $j$ and every $z\in \Arg^{-1}(e^{i\theta})$.
Namely,  we have the following inclusion:
$$
\bigcap_{f\in \mathcal{I}(V)} co\mathscr{S}_f \subset \{   \theta \in \mathscr{D}|\, \, {\rm there \,\, exists}\,\,   z\in V\,\, {\rm and}\,\, 
\exp(i\theta) = \Arg (z)  \} .
$$
In other words, $\bigcap_{f\in \mathcal{I}(V)} co\mathscr{A}_f \subset co\mathscr{A}(V)$.\hspace{5cm} $\Box$

%%%%%%%%
%%%%%%%%
%%%%%%%%
%%%%%%%% 

\section{(Co)Amoebas  of  linear spaces}\label{sec:4}

Throughout this section, $\mathscr{P}:= P(k) \cap (\mathbb{C}^*)^{k+m}$  where  $P(k)$ is  the $k$-dimensional 
  affine linear subspace of $\mathbb{C}^{k+m}$ given by the   parametrization  $\rho$  as follows:
\begin{equation}
\begin{array}{ccccl}
\rho&:&\mathbb{C}^k&\longrightarrow&\mathbb{C}^{k+m}\\
&&(t_1,\ldots ,t_k)&\longmapsto&(t_1,\ldots ,t_k,f_1(t_1,\ldots ,t_k),\ldots ,f_m(t_1,\ldots ,t_k)),
\end{array}
\end{equation}
where  $f_j(t_1,\ldots ,t_k) = b_j+\sum_{i=1}^ka_{ji}t_i$,  and  $a_{ji}$,\,  $b_j$ are  complex numbers for
 $i=1,\ldots , k$, and $j=1,\ldots , m$.  By abuse of language, we call $\mathscr{P}$ an affine linear space instead of very affine linear space.  First of all,  if $\mathscr{P}$ is generic then  all the coefficients  
$b_j$ are different than zero. 
 Otherwise $\mathscr{P}$ will be contained in an affine  subgroup of $(\mathbb{C}^*)^{k+m}$.
Indeed,  if there exits $j$ such that $b_j=0$, then 
  there is an action of $\mathbb{C}^*$ on $\mathscr{P}$, and then $\mathscr{P}$
can be viewed as a product of  
$\mathbb{C}^*$ with an affine linear space of dimension $k-1$.  Namely, $\mathscr{P}$ lies in no affine subgroup, i.e., $\rho (\mathbb{C}^k)$ meets each of the $n$ coordinate hyperplanes of $\mathbb{C}^n$ in distinct hyperplanes, otherwise we may replace $(\mathbb{C}^*)^n$ by the smallest affine subgroup containing $\mathscr{P}$.

\begin{lemma}
If $\mathscr{P}$ is generic, then we can assume that $f_1(t_1,\ldots ,t_k) = 1+\sum_{i=1}^kt_i$.
\end{lemma}

\begin{proof} 
In fact,  if we  make  a translation by 
$\frac{1}{b_1}$ in the algebraic multiplicative torus $(\mathbb{C}^*)^{k+m}$, we get 
 $$
 \biggl(\frac{t_1}{b_1},\ldots ,   \frac{t_1}{b_1}, \frac{f_1(t_1,\ldots ,t_k)}{b_1} , \ldots ,\frac{f_m(t_1,\ldots ,t_k)}{b_1}\biggr).
 $$ 
 We translate  again by $a = (a_{11},a_{21},\ldots ,a_{1k},1,\dots ,1)$ to obtain:
$$
\biggl(\frac{a_{11}t_1}{b_1},\ldots \frac{a_{1k}t_k}{b_1}, 1+\sum_{i=1}^k\frac{a_{1i}t_i}{b_1}, \frac{f_2(t_1,\ldots ,t_k)}{b_1},\ldots , 
\frac{f_m(t_1,\ldots , t_k)}{b_1}\biggr).
$$
For any point $z$ in $(\mathbb{C}^*)^{k+m}$,  we  denote by $\tau_z$ the translation  by $z$ in the multiplicative group  $(\mathbb{C}^*)^{k+m}$, and 
denote by $\rho'$ the required parametrization, i.e., 
$$
\rho' (t_1,\ldots ,t_k) = \biggl(t_1,\ldots ,t_k,1+\sum_{i=1}^kt_i, f_2(t_1,\ldots ,t_k), \ldots ,f_m(t_1,\ldots ,t_k)\biggr).
$$
Hence,  we obtain $\tau_a\circ\tau{\frac{1}{b_1}}\circ\rho = \rho'\circ\tau_c$, where $c=(\frac{a_{11}}{b_1},\ldots ,\frac{a_{1k}}{b_1})$, and then,
for any $(t_1,\ldots ,t_k)$ in $(\mathbb{C}^*)^k$ we have:
$$
\Arg \biggl(\rho (t_1,\ldots ,t_k)\biggr)  - \Arg (b_1) + \Arg (a) = \Arg \biggl(\rho' (\tau_c(t_1,\ldots ,t_k))\biggr).
$$
We obtain the same relation if we replace the argument map by the logarithmic map.
This means that  the amoeba (respectively coamoeba) of a generic  complex  affine linear space  $\mathscr{P}$ given by the parametrization $(1)$ 
is the translation in the real space $\mathbb{R}^{k+m}$  (respectively the real torus $(S^1)^{k+m}$) by a vector  $v$ in $\mathbb{R}^{k+m}$ (respectively
 a point in the real  torus) of an affine linear space given by a 
parametrization such that $f_1(t_1,\ldots ,t_k) = 1+\sum_{i=1}^kt_i$.
Hence,
$co\mathscr{A}(\mathscr{P}) = \tau_v\circ co\mathscr{A}(\mathscr{P}_{\rho'})$ where $\mathscr{P}_{\rho'}$ is the affine linear
 space  given by the required  parametrization, and we have a similar equality for their  amoebas. 
 In the last formula, $v$  is the
 argument  of the vector $b_1^{-1}a$. 
 \end{proof}
 
\hspace{0.1cm} To be more precise, $\mathscr{P}$ can be seen as the image by $\rho$ of the complement in $\mathbb{C}^k$ of an arrangement of $n$ hyperplanes $\mathscr{H}:= \cup_{i=1}^k\{ t_i= 0\}\cup_{j=1}^m\{f_j = 0\}$.

%%%%%%%% 
%%%%%%%% 
%%%%%%%% 
%%%%%%%% 
%%%%%%%% 

\subsection{(Co)Amoebas of lines in $(\mathbb{C}^*)^{1+m}$}\label{sec:4}

 In this subsection  we give  a complete description of  (co)amoebas of  generic lines in  $(\mathbb{C}^*)^{1+m}$ (we mean a complex subvariety  of complex dimension one defined by an ideal generated by  polynomials of degree one). Moreover, we describe the (co)amoebas of 
 real  lines, i.e., lines those are invariant under the involution given by the conjugation of complex numbers.  In other word,  lines given by a parametrization with real coefficients.
  But first,  let $L$ be a generic  line in $(\mathbb{C}^*)^{1+m}$ parametrized as follows:
 \begin{equation}
\begin{array}{ccccl}
\rho&:&\mathbb{C}^*&\longrightarrow&(\mathbb{C}^*)^{1+m}\\
&&t&\longmapsto&(t, t+1, a_2t+b_2, \ldots , a_mt+b_m),
\end{array}
\end{equation}
where $a_j$ and $b_j$ are  non vanishing  complex numbers.  

\begin{lemma}\label{Line} 
There are two types of amoebas of lines in $(\mathbb{C}^*)^{1+m}$ for $m\geq 3$. There are amoebas with boundary and other without boundary (we mean topological boundary). 
The amoebas  of  generic  lines given by  the parametrization $(2)$ have boundary  if and only if  $\frac{a_i}{b_i}\in \mathbb{R}^*$ for all $j=2,\ldots , m$. 
\end{lemma}

\begin{proof}
Since the boundary of an amoeba is a subset of the set of critical values of the logarithmic map, then an amoeba has a boundary means  that the set of critical points of the logarithmic map restricted to the variety is nonempty (see \cite{MN3-12}, and \cite{M3-00} for more details about the critical points).
 The Jacobian of the logarithmic map restricted to the line $L$ is given by:
$$
 \Jac (\Log_{|L})(t) = \frac{\partial \Log}{\partial (t,\bar{t})}  = \frac{1}{2} \left( 
 \begin{array}{cc}
 1/t&1/\bar{t}\\
 1/(t+1)&1/(\bar{t}+1)\\
a_2/(a_2t+b_2)&\bar{a}_2/(\overline{a_2t+b_2})\\
 \vdots&\vdots\\
a_m/(a_mt+b_m)&\bar{a}_m/(\overline{a_mt+b_m})
 \end{array}\right).
$$
 Hence, a point $\rho (t)$ is critical for $\Log_{|L}$ if and only if all the $2\times 2$-minors of the Jacobian matrix have determinant equal to zero. Let us write down these relations.
 The determinant of the  $2\times 2$-minor given by the two first rows:
$$
\frac{1}{2}  \left( 
 \begin{array}{cc}
 1/t&1/\bar{t}\\
 1/(t+1)& 1/(\bar{t}+1)
 \end{array}\right)
 $$
 is equal to zero, means the following equality holds:  
 $$
 \frac {1}{t} \frac{1}{\bar{t}+1} = \frac{1}{\bar{t}} \frac{1}{t+1}.
 $$ 
 
This  implies that $t$ should be real. For all $i= 2,\ldots m$, the $2\times 2$-minor:
$$
\frac{1}{2} \left( 
 \begin{array}{cc}
 1/t&1/\bar{t}\\
a_i/(a_it+b_i)&\bar{a}_i/(\overline{a_it+b_i})
 \end{array}\right)
 $$
gives the following relation:
 $$
 \frac {1}{t} \frac{\bar{a}_i}{(\overline{a_it+b_i})} = \frac{1}{\bar{t}} \frac {a_i}{a_it+b_i}.
 $$
 
 But $t$ is real, so $\bar{a}_i(a_it+b_i) = a_i(\overline{a_it+b_i})$, and hence 
 $\frac{a_i}{b_i} = \overline{(\frac{a_i}{b_i})}$, i.e., $\frac{a_i}{b_i}\in \mathbb{R}^*$ (because $L$ is generic,  all the coefficients are different than zero).  So, if  $\frac{a_i}{b_i}\in \mathbb{R}^*$ for $i=2,\ldots , m$, then
the set of critical points of $\Log_{|L}$ is the image under $\rho$ of the real part of $\mathbb{C}^*$, where this image intersects $(m+2)$ quadrants of $\mathbb{R}^{1+m}$ because $L$ is generic.
Moreover, this shows that the set of critical values of $\Log_{|L}$ is the image under $\Log \circ \rho$ of the real part of  $\mathbb{C}^*$, and the number of its connected components is $(m+2)$.
So, a  generic complex line given by the  parametrization $(2)$ with  $\frac{a_i}{b_i}\in \mathbb{R}^*$ for $i=2,\ldots , m$ is real up to a translation by a complex number, and its amoeba is a surface with boundary, and the boundary has  $(m+2)$ connected components. Also, we can check in this case that   the cardinality of the inverse image of a regular  (respectively critical) value is two (respectively one). 
\end{proof}

This motivates the following definition (see \cite{MR-00} for real plane curves):

\begin{definition}
A  generic affine line given by the following parametrization:
\begin{equation}
\begin{array}{ccccl}
\rho&:&\mathbb{C}^*&\longrightarrow&(\mathbb{C}^*)^{1+m}\\
&&t&\longmapsto&(t, a_1t+b_1, a_2t+b_2, \ldots , a_mt+b_m),
\end{array}
\end{equation}
where $a_j$ and $b_j$ are in $\mathbb{C}^*$
 is called  real  up to a translation by a vector in $(\mathbb{C}^*)^{1+m}$ if and only if
 $[\frac{a_{1}}{b_1}: \ldots  :\frac{a_{m}}{b_m}]\in \mathbb{R}\mathbb{P}^{m-1}$.
\end{definition}
If a line $L$ in $(\mathbb{C}^*)^{1+m}$ with $m\geq 2$ is not real, then its amoeba is a surface without boundary homeomorphic to the Riemann sphere without $(m+2)$ points  (see proof of Lemma \ref{Line}), and the map $\Log_{|L}$ is a  one-to-one map.

\vspace{0.2cm}

\hspace{0.1cm} The following lemma  gives a description of the coamoeba of a generic line in $(\mathbb{C}^*)^{1+m}$ with $m\geq 1$
\begin{lemma}
Let $L\subset (\mathbb{C}^*)^{1+m}$ be a generic   line  given by the parametrization $(3)$. The restriction of the argument map to the set of its regular points in $L$ is injective, and the inverse image under the argument map of a critical value has real dimension one. 
\end{lemma}

\begin{proof}
To see injectivity, let $(e^{i\theta}, e^{i\psi_1}, \ldots , e^{i\psi_m})$ be a fixed  regular value in $co\mathscr{A}(L)$. In other word, we have  $t= |t|e^{i\theta}$, and $f_j(t) = (a_jt+b_j) = |a_jt+b_j|e^{i\psi_1}$ for $j=1, \ldots , m$, and consider  $a_jt,\,  b_j$, and $f_j(t)$ as a vectors in the complex plane. Hence, for each $j=1, \ldots , m$ we obtain a parallelogram with vertices the origin,   and the extremities of the three vectors $a_jt,\,  b_j$, and $f_j(t)$. If one of these vectors is fixed, and the arguments of the
two others are fixed (which is our case, because $b_j$ is given and the arguments of $a_jt$ and $f_j(t)$ are fixed by assumption), then there exists at most  one parallelogram with those vertices. This implies the injectivity. 

\vspace{0.1cm}

\hspace{0.1cm} The second part of the lemma comes from the fact that the set of critical points of the logarithmic map and the argument map coincide (see Proposition 3.1 in \cite{MN2-11}). Indeed, the set of critical points is equal to $(m+2)$ connected components of dimension one (each one corresponds to the intersection of the real part of $L$ with some quadrant of $(\mathbb{R}^*)^{m+1}$).
\end{proof}

\vspace{0.3cm}

The set of critical points  of the argument map restricted to $L$  given by the parametrization $(3)$  is the image by $\rho$ of the real part of $\mathbb{C}^*$ translated  by $(1, b_1,\ldots , b_m)$ in $(\mathbb{C}^*)^{1+m}$ as a multiplicative group. So, the set of critical values  consists of the translation by $(1, \frac{b_1}{|b_1|}, \ldots , \frac{b_m}{|b_m|})$  of  $(m+2)$ points in the real torus  $(S^1)^{1+m}$ from the $2^{m+1}$  real points corresponding to the arguments of the $2^{m+1}$ quadrants of $\Ree ((\mathbb{C}^*)^{1+m}) = (\mathbb{R}^*)^{1+m}$.  The closure of the coamoeba of $L$ contains an arrangement of $(m+1)$ geodesic circles. Each  circle corresponds to an end of the line (i.e., where $L$ meets a coordinate axis). The union of these  circles is the set of accumulation points of arguments of sequences in $L$ with unbounded logarithm, and is called the phase limit set of $L$ (see \cite{NS1-11}  for more details). It is the counterpart of the logarithmic limit set introduced by Bergman in 1971 (see \cite{B-71} and \cite{MS} for more details), which consists of $(m+2)$ points is our case. In Figure 1 (respectively Figure 2), we draw the amoeba and the coamoeba of a real  (respectively non real) line in $(\mathbb{C}^*)^3$. The coamoebas in Figure 1, and Figure 2 are made  with collaboration with F. Sottile.

\vspace{0.3cm}

\begin{figure}[h!]
\begin{center}
\includegraphics[angle=0,width=0.7\textwidth]{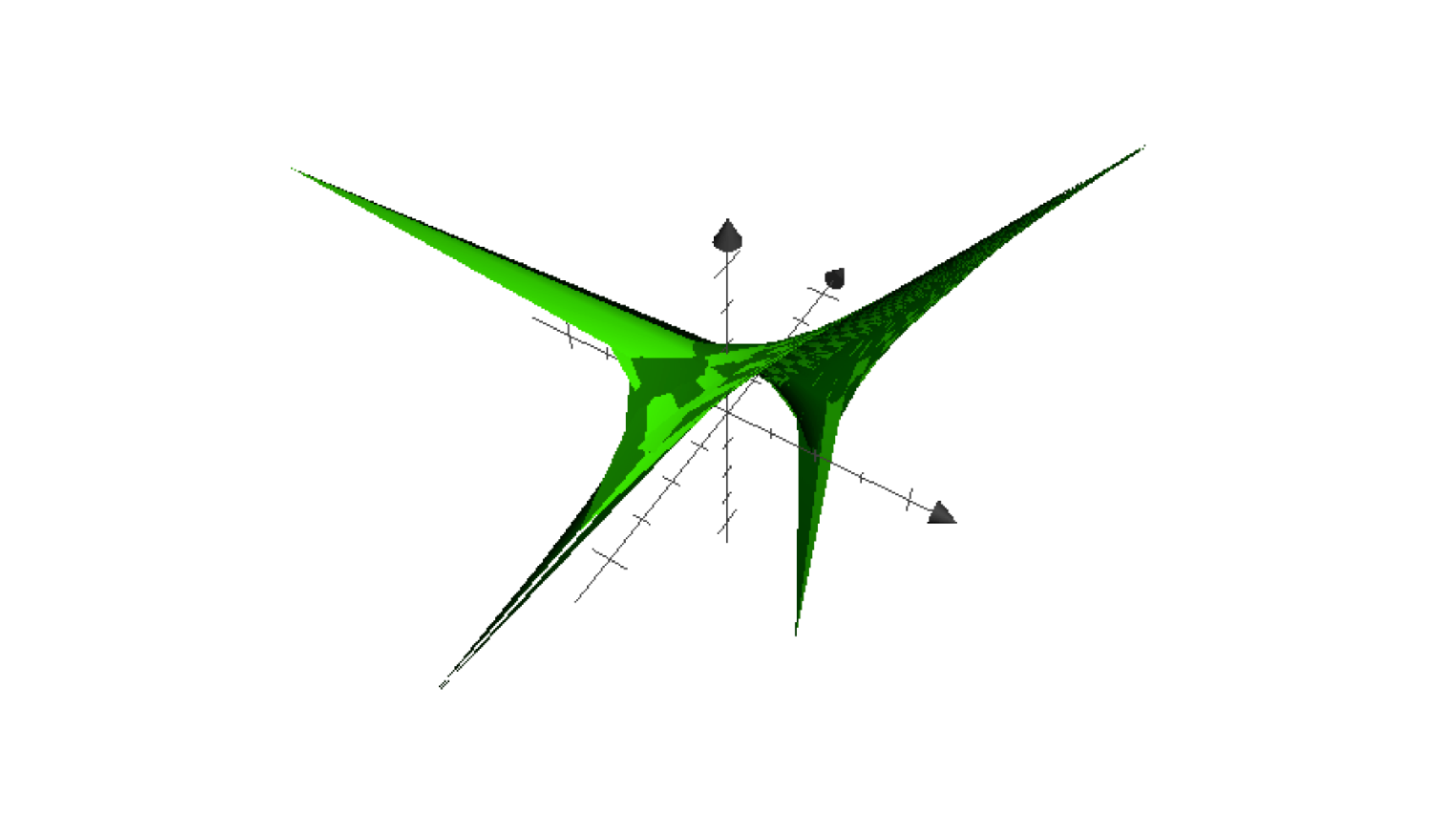}
\includegraphics[angle=0,width=0.27\textwidth]{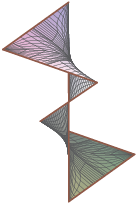}\qquad\qquad
\caption{The amoeba and the coamoeba of the real line in $(\mathbb{C}^*)^3$ given by the parametrization 
$\rho (z)=(z,z+\frac{1}{2},z-\frac{3}{2})$.  The amoeba is topologically   the closed disk without four points of its boundary.}
\label{c}
\end{center}
\end{figure}

\vspace{0.1cm}

\begin{figure}[h!]
\begin{center}
\includegraphics[angle=0,width=0.65\textwidth]{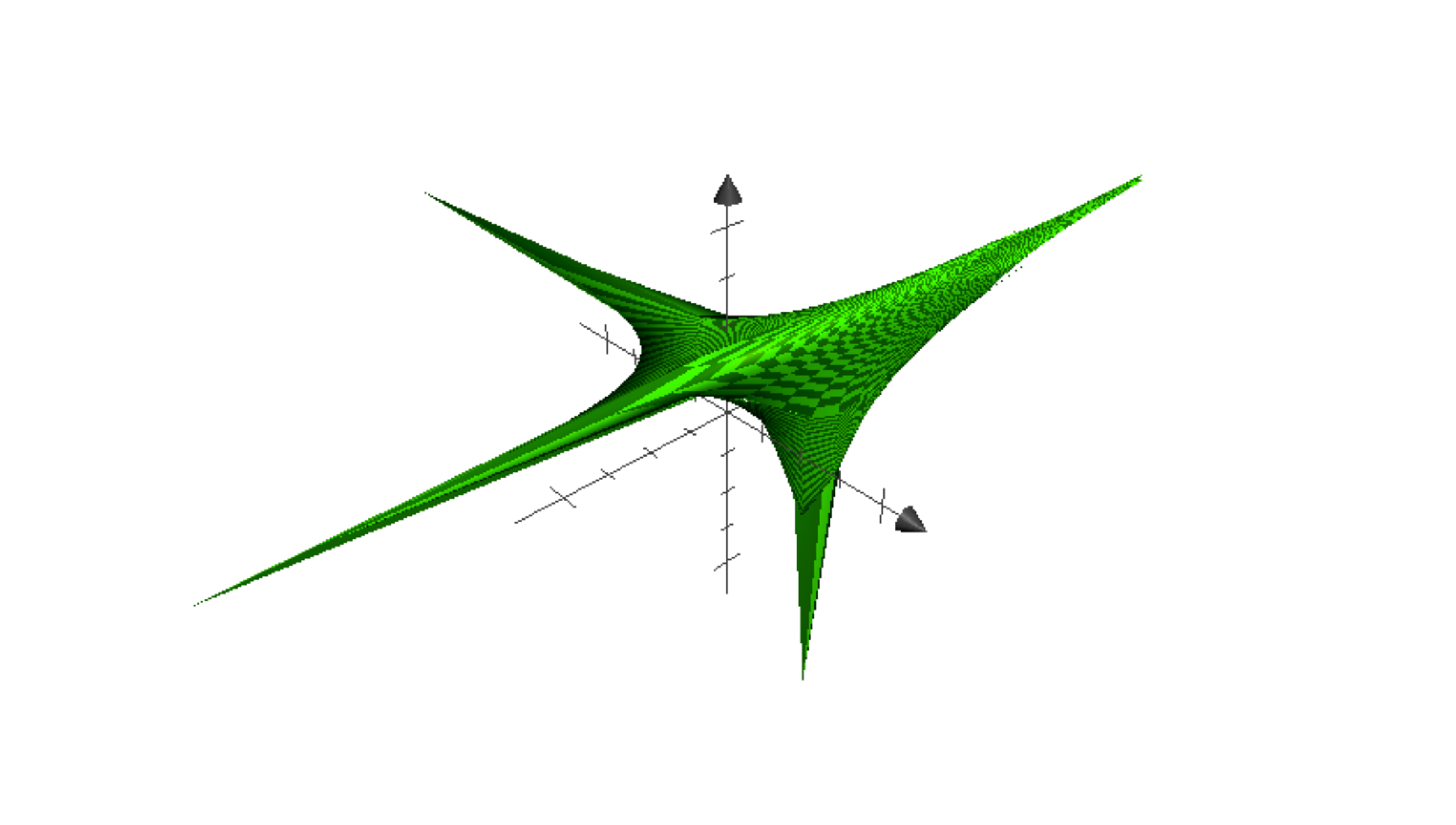}
\includegraphics[angle=0,width=0.3\textwidth]{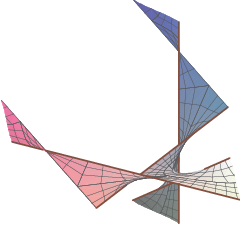}\qquad\qquad
\caption{The amoeba and the coamoeba of the complex line (i.e., not real) in $(\mathbb{C}^*)^3$ given by the parametrization 
$\rho (z)=(z,z+1,z-2i)$. The amoeba is topologically  the Riemann sphere without four points.}
\label{c}
\end{center}
\end{figure}

%%%%%%%%
%%%%%%%%
%%%%%%%%
%%%%%%%%

\vspace{0.1cm}

\section{Volume of (co)amoebas  of  $k$-dimensional very affine linear spaces in $(\mathbb{C}^*)^{2k}$}\label{sec:5}

\vspace{0.1cm}

It was shown by  Rullg\aa rd and the second  author   in \cite{PR-04} that the area of the amoeba of a complex  algebraic plane curve  is always finite,  and the bound is given in terms of the area of the Newton polygon of the defining polynomial. Mikhalkin and Rullg\aa rd proved that this  bound is always sharp for (possibly singular) Harnack curves (see \cite{MR-00}). It was shown by Madani  and the first author in \cite{MN1-11}  that the    volume of the amoeba of  a  $k$-dimensional algebraic variety in $(\mathbb{C}^*)^n$  with $n\geq 2k$ is finite. This generalizes the result of Rullg\aa rd and the second author about the finiteness of the  volume of the amoeba of plane curves. 
In this section, we compute the volume of the amoeba of a generic real $k$-dimensional  very
affine linear space in $(\mathbb{C}^*)^{2k}$. We will proceed as follows: (i)    We show  that the argument map restricted to the subset of   regular points in the very affine linear space is injective; (ii) We compute the volume of the coamoeba of any $k$-dimensional  very affine linear space in $(\mathbb{C}^*)^{2k}$; (iii) 
 We  compute the cardinality of the inverse image under the logarithmic map of any regular value  in the amoeba of a real affine space, and prove that this cardinality  is a constant and equal to $2^k$; 
(iv) We use that the map $\Log\circ\Arg^{-1}$ conserves the volume i.e., the determinant of its  Jacobian  has absolute value equal one (see Proposition 3.1 in \cite{MN2-11}), and  finally we compute the volume of the amoeba, which is  equal to  the coamoeba  volume divided by $2^k$ if the plane  is real. 
We will use the following lemma  proved in \cite{MN3-12}, which is a generalization of Mikhalkin's  Lemma 4.3 in \cite{M2-04} for hypersurface:

\begin{lemma}[Madani-Nisse]\label{Lem MN1}
Let $V\subset (\mathbb{C}^*)^{n}$ be a $k-$dimensional  algebraic variety,  and $z$ be a smooth point of $V$. Then 
$z$ is  a critical point  for the map  $\Log_{| V}$ if and only if the image of the 
tangent space $T_zV$ to $V$ at $z$ by the derivative of the complex logarithm $d\mathscr{L}og$ contains at least $s$ purely  imaginary linearly  independent vectors with $s = \max \{ 1, 2k-n+1\}$.
\end{lemma}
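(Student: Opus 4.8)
The plan is to reduce the statement to a linear-algebra computation at the level of differentials, exploiting the factorization of the real logarithm through the holomorphic one. First I would record that $\Log = \Ree\circ\mathscr{L}og$ wherever a branch $\mathscr{L}og$ is defined; since two branches differ by a locally constant element of $2\pi i\Z^n$, the differential $d\mathscr{L}og_z$ does not depend on the branch, so the statement is well posed. Under the standard trivialization $T_z(\C^*)^n\cong\C^n$ this differential is the diagonal matrix $\mathrm{diag}(1/z_1,\ldots,1/z_n)$, hence a $\C$-linear isomorphism. Consequently the image $W:=d\mathscr{L}og_z(T_zV)$ is a complex $k$-dimensional (real $2k$-dimensional) subspace of $\C^n$, and differentiating $\Log|_V=\Ree\circ\mathscr{L}og|_V$ yields
$$
d(\Log|_V)_z=\Ree|_W\circ d\mathscr{L}og_z|_{T_zV}.
$$
Because $d\mathscr{L}og_z$ is an isomorphism onto $W$, the point $z$ is critical for $\Log|_V$ exactly when the real-linear map $\Ree|_W:W\to\R^n$ fails to have maximal rank, i.e. when $\mathrm{rank}_{\R}(\Ree|_W)<\min\{2k,n\}$.

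Next I would identify the kernel: a vector $w\in W$ satisfies $\Ree(w)=0$ precisely when it is purely imaginary, so $\ker(\Ree|_W)=W\cap i\R^n$ is exactly the space of purely imaginary vectors contained in $W$. Writing $s_0:=\dim_{\R}(W\cap i\R^n)$ for the number of independent such vectors, the rank–nullity theorem gives $\mathrm{rank}_{\R}(\Ree|_W)=2k-s_0$.

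Combining the two steps, $z$ is critical if and only if $2k-s_0<\min\{2k,n\}$, that is if and only if $s_0>2k-\min\{2k,n\}=\max\{0,2k-n\}$, equivalently $s_0\geq\max\{0,2k-n\}+1=\max\{1,2k-n+1\}=s$, which is the asserted condition. The argument is essentially linear-algebraic once the factorization through $\mathscr{L}og$ is in place, so I do not expect a serious obstacle; the only points demanding care are the bookkeeping of real versus complex dimensions — in particular that $W$ has real dimension $2k$ while $W\cap i\R^n$ need not itself be a complex subspace — and the verification that the deficiency threshold $\max\{0,2k-n\}+1$ coincides with $s=\max\{1,2k-n+1\}$ in both regimes $2k\leq n$ and $2k>n$.
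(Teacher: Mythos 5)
Your argument is correct, and there is nothing in the paper to compare it against: Lemma 6.1 is stated without proof, being imported from the preprint [MN3-12], so your factorization $\Log=\Ree\circ\mathscr{L}og$, the observation that $d\mathscr{L}og_z$ is a $\C$-linear isomorphism sending $T_zV$ to a complex $k$-plane $W$, the identification $\ker(\Ree|_W)=W\cap i\R^n$, and the rank--nullity bookkeeping $2k-s_0<\min\{2k,n\}\iff s_0\geq\max\{1,2k-n+1\}$ constitute exactly the natural self-contained proof. The only point worth flagging is interpretive rather than mathematical: ``contains at least $s$ purely imaginary vectors'' must be read as ``$s$ linearly independent purely imaginary vectors,'' i.e.\ $\dim_{\R}(W\cap i\R^n)\geq s$, which is how you (correctly) use it.
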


\vspace{0.2cm}

Also, we will use the following proposition proved in \cite{MN3-12}:

\begin{proposition}[Madani-Nisse]\label{Pro MN1}
Let $\mathscr{P}\subset (\mathbb{C}^*)^n$ be a generic $k$-dimensional  very affine linear space with $n\geq 2k$. Suppose that the complex
 dimension of $\mathscr{P}\cap \overline{\mathscr{P}}$ is equal to $l$, with $0\leq l\leq k$. Then, for any regular value $x$ in
 the amoeba $\mathscr{A}(\mathscr{P})$ of $\mathscr{P}$,  the cardinality of $\Log^{-1}(x)$ is at least $2^l$.
\end{proposition}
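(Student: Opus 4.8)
The plan is to translate the statement into a count of phases and then to organize that count around the real structure carried by the intersection $W:=\mathscr{P}(k)\cap\overline{\mathscr{P}(k)}$. First I would fix a regular value $x$ of $\Log_{|\mathscr{P}(k)}$ lying in the amoeba and use the parametrization $\rho$ with $t_i=r_ie^{i\theta_i}$, keeping the moduli $r_i=e^{x_i}$ fixed. A point of $\Log^{-1}(x)\cap\mathscr{P}(k)$ is then exactly a choice of phases $\theta=(\theta_1,\dots,\theta_k)\in(S^1)^k$ satisfying the real equations $|f_j(r_1e^{i\theta_1},\dots,r_ke^{i\theta_k})|=e^{x_{k+j}}$ for $j=1,\dots,n-k$ (the moduli of the first $k$ coordinates being automatic). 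Counting preimages thus becomes bounding from below the number of solutions of a system of real trigonometric equations on $(S^1)^k$ (overdetermined when $n>2k$, which reflects that $x$ must lie in the amoeba), and the target lower bound is $2^l$.

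Before counting, I would record what $l$ means algebraically. Since $\overline{W}=\overline{\mathscr{P}(k)}\cap\mathscr{P}(k)=W$, the intersection $W$ is invariant under complex conjugation, hence is an affine linear space defined over $\mathbb{R}$; a direct computation with the parametrization shows $z=(t,f(t))\in W$ if and only if $\sum_i\Ima(a_{ji})\,t_i=-\Ima(b_j)$ for all $j$, so that $l=k-\mathrm{rank}\big(\Ima(a_{ji})\big)$. In other words $\mathscr{P}(k)$ carries an $l$-dimensional \emph{real part}. The conceptual core of the proof is the \emph{product model}: when $\mathscr{P}(k)$ is a product of $l$ generic real lines and $k-l$ generic complex lines, each real factor is a line whose phase equation $|b+ar e^{i\theta}|=\rho$ determines $\cos\theta$, hence two opposite phases $\theta\mapsto-\theta$, while each complex factor contributes a single phase. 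The independent reflections on the real factors generate a group $(\mathbb{Z}/2)^l$ acting freely on the fiber and producing exactly $2^l$ preimages, in agreement with the folding already visible in the Examples (the real line folds a sphere-minus-points $2:1$ onto the disk-minus-points, while the complex line maps with degree one).

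The remaining and essential step is to transport this lower bound from the product model to an arbitrary $\mathscr{P}(k)$ in general position with the same $l$. I would connect the two by a path of generic affine linear spaces keeping $\mathrm{rank}(\Ima A)$, hence $l$, constant, and argue that the $2^l$ solutions persist along the path. Here the regular-value hypothesis is exactly what is needed: by Lemma 6.2 (with $s=1$, since $n\geq 2k$) a point of the fiber is regular precisely when $d\mathscr{L}og(T_z\mathscr{P}(k))$ contains no purely imaginary vector, which by the rank-$2k$ property of the matrix $\widehat{A}(z)$ guarantees that each of the $2^l$ sheets is transverse and that no two of them coincide. Transversality makes each solution locally persistent, so the number of real solutions should not drop below $2^l$ along the deformation.

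The hard part will be precisely this persistence: a priori, real solutions of the trigonometric system can collide and leave the real torus as the configuration moves away from the product model, so an unsigned lower bound does not follow from a naive signed count (the degree into the affine target vanishes, yielding only parity information). Controlling this requires the real-algebraic input that the $2^l$ sheets, being indexed by the free $(\mathbb{Z}/2)^l$-action in the product model and kept transverse by the logarithmic Gauss map condition, remain distinct and real throughout the deformation; making that control rigorous---ruling out real-to-complex collisions using the genericity of $\mathscr{P}(k)$---is the crux of the argument and the step I would expect to occupy most of the work.
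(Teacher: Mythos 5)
You have honestly flagged the weak point yourself, and it is fatal as the argument stands: the persistence step is not a technicality but is essentially the whole content of the proposition. An unsigned count of real solutions of a real system is not a deformation invariant --- as the affine space moves along your path, the critical value locus of $\Log_{|\mathscr{P}(k)}$ sweeps across the fixed point $x$, and each such crossing changes the fibre cardinality by an even number (real solutions collide and leave through a fold). Transversality of the $2^l$ sheets at the endpoints only guarantees local persistence for small deformations; it gives no control over a path joining an arbitrary generic $\mathscr{P}(k)$ to the product model, and you give no reason why the space of generic planes with fixed $l$ is even path-connected through configurations for which $x$ stays regular. A signed or mod-2 degree will not rescue this, as you note. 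What is actually needed --- and what the cited source \cite{MN3-12} supplies, this proposition being imported into the present paper without proof --- is a direct mechanism producing $2^l$ distinct points in a single fibre, e.g.\ a group of $2^l$ involutions on $\Log^{-1}(x)\cap\mathscr{P}(k)$ built from partial conjugations adapted to a splitting of $\mathscr{P}(k)$ along its real part $W=\mathscr{P}(k)\cap\overline{\mathscr{P}(k)}$ (your identification $l=k-\mathrm{rank}\,\Ima(a_{ji})$ is the right starting point for that), rather than a count transported from a model.

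There is also a computational error in the model itself. A line in $(\mathbb{C}^*)^2$, real or not, is generically $2$-to-$1$ over the interior of its amoeba: the single equation $|b+are^{i\theta}|=\rho$ always determines $\cos(\theta+\phi)$ for some phase shift $\phi$, hence two values of $\theta$, whether or not $a/b\in\mathbb{R}$. The dichotomy you invoke (real line folds $2{:}1$, complex line maps $1{:}1$) is the picture for lines in $(\mathbb{C}^*)^{1+m}$ with $m\geq 2$, where the extra constraints kill one of the two solutions unless the line is real; it fails precisely in the case $n=2k$ relevant to Theorem 6.1. Consequently your product model in $(\mathbb{C}^*)^{2k}$ has $2^k$ preimages for every value of $l$, so it does not exhibit the bound $2^l$, and the deformation would have to annihilate exactly $2^k-2^l$ solutions while sparing $2^l$ of them --- which makes the missing persistence argument even less plausible as a route to the stated lower bound.
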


\vspace{0.2cm}

Let $\mathscr{P}\subset (\mathbb{C}^*)^{2k}$ be  a generic $k$-dimensional   very affine
linear space. Suppose $\mathscr{P}$ is given by the  parametrization $\rho$:
\begin{equation}
\begin{array}{ccccl}
\rho&:&(\mathbb{C}^*)^k&\longrightarrow&(\mathbb{C}^*)^{2k}\\
&&(t_1,\ldots ,t_k)&\longmapsto&(t_1,\ldots ,t_k,f_1(t_1,\ldots ,t_k),\ldots ,f_k(t_1,\ldots ,t_k)),
\end{array}
\end{equation}
 with  $f_j(t_1,\ldots ,t_k) = b_j+\sum_{i=1}^ka_{ji}t_i$,  where  $a_{ji}$, and $b_j$ are  complex numbers for $i=1,\ldots , k$  
and $j=1,\ldots , k$. Since the space $\mathscr{P}$ is generic, then there is no $b_j=0$.

\vspace{0.2cm}

\begin{definition}
A generic  $k$-dimensional   very affine linear space  $\mathscr{P}(k)\subset (\mathbb{C}^*)^{2k}$ 
given by the   parametrization $(4)$
is said to be real up to a translation by a complex vector in the multiplicative group $(\mathbb{C}^*)^{k+m}$ if and only if the $(m\times k)$-matrix given by
$$
\left( \begin{array}{ccc} \frac{a_{11}}{b_1} & \ldots & \frac{a_{1k}}{b_1} \\ 
\vdots & \vdots& \vdots \\ 
  \frac{a_{k1}}{b_k}  & \ldots &   \frac{a_{kk}}{b_k} 
 \end{array} \right) 
$$
has rank $k$ and all of its entries are real. 
\end{definition}

Let $\mathbb{Z}_2 := \{ \pm 1\}$  be the real subgroup of the multiplicative group  $\mathbb{C}^*$, and $\mathbb{Z}_2^{2k}$  be the finite real subgroup of $(\mathbb{C}^*)^{2k}$. For each 
$s\in\mathbb{Z}_2^{2k}$, let $\rho_s$ be the parametrization given by $\rho_s(t_1,\ldots ,t_k) = s.\rho (t_1,\ldots ,t_k)$ where $s.(z_1,\ldots ,z_{2k})$ $ = (s_1z_1,\ldots ,s_{2k}z_{2k})$ for any $(z_1,\ldots ,z_{2k})\in (\mathbb{C}^*)^{2k}$, and $s=(s_1,\ldots ,s_{2k}) \in \mathbb{Z}_2^{2k}$.
 Let  $\mathscr{P}_s$  be the  $k$-dimensional  very affine linear space in $(\mathbb{C}^*)^{2k}$
 parametrized by $\rho_s$. Let  us denote by $Reg (co\mathscr{A}(\mathscr{P}_s))$ the set of regular values of $co\mathscr{A}(\mathscr{P}_s)$. Remark that if $1$ denotes the identity element of the group $\mathbb{Z}_2^{2k}$, then $\mathscr{P} = \mathscr{P}_1$.

 \vspace{0.2cm}

%\newpage

\hspace{0.1cm } Let $u\in\mathbb{Z}_2^{2k}$ and  denote by  $Reg (co\mathscr{A}(\mathscr{P}_u))$ 
the set of regular values of the coamoeba $co\mathscr{A}(\mathscr{P}_u)$.

\begin{proposition} \label{main prop}
With the above notations,  the following statements hold:
\begin{itemize}
\item[(i)]\,  For all $s$, the argument map from the subset of regular points of $\mathscr{P}_s$ to the set of regular values of its coamoeba
 $co\mathscr{A}(\mathscr{P}_s)$ is injective;
\item[(ii)]\, Let $s$ and $r$  in  $\mathbb{Z}_2^{2k}$ with $s\ne r$, then the set 
$$
Reg (co\mathscr{A}(\mathscr{P}_s))\cap Reg (co\mathscr{A}(\mathscr{P}_r))
$$
is empty;
\item[(iii)]\, The union
$
\bigcup_{s\in \mathbb{Z}_2^{2k}}Reg (co\mathscr{A}(\mathscr{P}_s))
$
is an open dense subset of the real torus $(S^1)^{2k}$.
\end{itemize}
\end{proposition}

\vspace{0.3cm}
 
\hspace{0.1cm} First of all, we denote by $z:= (z_1,\ldots, z_{2k})$ the coordinates of $\mathbb{C}^{2k}$. So, if $z$ is a point in $\mathscr{P}$, then $z_i=t_i$   and $z_{k+i} = f_i(z_1,\ldots,z_k)$ for $1\leq i\leq k$.
Let $\Theta = (e^{i\theta_1},\ldots ,e^{i\theta_k}, e^{i\psi_1},\ldots , e^{i\psi_k})$ be a point in the set of regular values of 
$co\mathscr{A}(\mathscr{P})$.
 This means that  the linear system  $(E)$ of $2k$ equations and $2k$ variables 
$(x_1,\ldots , x_k, y_1,\ldots , y_k)$ in $(\mathbb{R}_+^*)^{2k}$:
 $$
\left\{ \begin{array}{ccc}
\Ree (b_j +\sum_{l=1}^ka_{jl}x_le^{i\theta_l})& = &\Ree\,  (y_je^{i\psi_j})\\
\Ima  (b_j +\sum_{l=1}^ka_{jl}x_le^{i\theta_l})& = &\Ima\, (y_je^{i\psi_j}) 
\end{array}\right.  \quad\quad\quad\quad \quad\quad  (E)
$$ 
with $j=1\ldots , k$,  has a solution in $(\mathbb{R}_+^*)^{2k}$.  Moreover, if $\mathbb{Z}_2^{2k}$ is viewed as a subgroup of the real torus $(S^1)^{2k}$, then  $s.\Theta \in \bigcup_{u\in \mathbb{Z}_2^{2k}}Reg (co\mathscr{A}(\mathscr{P}_u(k)))$  means that  the
 system $(E)$ has a solution in $(\mathbb{R}^*)^{2k}$.
 
 Since the matrix $A(z)$ defined by:
\begin{displaymath}
A(z) = \left( 
 \begin{array}{ccccccccc}
 a_{11}z_1& a_{12}z_2&\ldots& a_{1k}z_k&-z_{k+1}&0&0&\ldots&0\\
 a_{21}z_1& a_{22}z_2&\ldots& a_{2k}z_k&0&-z_{k+2}&0&\ldots&0\\
 \vdots& \vdots& \vdots& \vdots& \vdots& \vdots& \vdots& \vdots& \vdots\\
 a_{k1}z_1& a_{k2}z_2&\ldots& a_{kk}z_k&0&0&0&\ldots&-z_{2k}\\
 \end{array}\right)
 \end{displaymath}
is the image under the logarithmic Gauss map of the point $z$ in $\mathscr{P}$,  and the matrix $A(z)$ has  rank $k$ when $z$ is a regular point.

\vspace{0.3cm}

\noindent {\it Claim} I.  
If   $\overline{A}$  denotes the matrix conjugate to $A$, then for any regular point $z$ of $\mathscr{P}$ the matrix $\widehat{A}(z) = \left( \begin{array}{c} A(z)\\ \overline{A}(z)\end{array} \right)$  is of  rank $2k$.

\vspace{0.1cm}

\begin{proof}
  In fact, the rows of the matrix  $A(z)$ form a basis of the orthogonal space to $\mathscr{L}og (\mathscr{P})$  at  the point $\mathscr{L}og (z)$. So, if the rank of $\widehat{A}(z)$  is less than  $2k$, then the orthogonal space to $\mathscr{L}og (\mathscr{P})$  at $\mathscr{L}og (z)$ contains at least one real vector $v$ different than zero. This is equivalent to saying that the tangent  space to $\mathscr{L}og (\mathscr{P})$  at $\mathscr{L}og (z)$ contains at least one purely imaginary vector.   Indeed, since  $v$ is a vector different than zero orthogonal to  both $T_{\mathscr{L}og (z)}(\mathscr{L}og (\mathscr{P}))$ and $\Ima (\mathbb{C}^{2k})$, then $T_{\mathscr{L}og (z)}(\mathscr{L}og (\mathscr{P}))\cap\Ima (\mathbb{C}^{2k})$ must be of dimension at least one.
By Lemma \ref{Lem MN1}, this implies that $z$ is a critical point for the logarithmic map, which is in contradiction with our assumption on $z$.
\end{proof}
 The matrix defining the system $(E)$ is
$\widetilde{B}(\Theta ) = \left( \begin{array}{c} \Ree B(\Theta )\\ \Ima B(\Theta )\end{array} \right)$ where $B(\Theta )$ is  
\begin{displaymath}
 \left( 
 \begin{array}{ccccccccc}
a_{11}e^{i\theta_1}&a_{12}e^{i\theta_2}&\ldots&a_{1k}e^{i\theta_k}&-e^{i\psi_1}&0&0&\ldots&0\\
 a_{21}e^{i\theta_1}&a_{22}e^{i\theta_2}&\ldots&a_{2k}e^{i\theta_k}&0&-e^{i\psi_2}&0&\ldots&0\\
\vdots&\vdots&\vdots&\vdots&\vdots&\vdots&\vdots&\vdots&\vdots\\
a_{k1}e^{i\theta_1}&a_{k2}e^{i\theta_2}&\ldots&a_{kk}e^{i\theta_k}&0&0&0&\ldots&-e^{i\psi_k}
 \end{array}\right) .
 \end{displaymath}
We can check that the  rank of $\widetilde{B}(\Theta )$ is the same as the rank of the matrix 
$\widetilde{A}(z)=\left( \begin{array}{c} \Ree A(z)\\ \Ima A(z)\end{array} \right)$ with
 $z = (x_1e^{i\theta_1},\ldots ,x_ke^{i\theta_k}, y_1e^{\psi_1},\ldots , y_ke^{i\psi_k})$, because the variables $x_i$ and $y_j$ are non zero for all $i, j = 1, \ldots , k$.

\vspace{0.3cm}

\noindent {\it Claim} II.  
The rank of the matrix $\widetilde{A}(z)$ is  equal to $2k$.

\vspace{0.1cm}

\begin{proof}

 Suppose we have a non trivial linear combination of the rows of the matrix $\widetilde{A}(z)$ that is   equal to zero. Hence, there exist
 a real numbers 
$\lambda_l$, and $\mu_l$ not all equal to zero, with $l=1,\ldots ,k$ such that:
$$
\sum_{l,j=1}^k \frac{\lambda_l}{2}\biggl ( (z_ja_{lj}+\bar{z}_j\bar{a}_{lj}) -(z_{k+l} +\bar{z}_{k+l})  \biggr) + 
\frac{\mu_l}{2i}\bigg( (z_ja_{lj}-\bar{z}_j\bar{a}_{lj}) -(z_{k+l} -\bar{z}_{k+l})\biggr) = 0.
$$

We get:
$$
\sum_{l=1}^k\biggl(\frac{\lambda_l-i\mu_l}{2}\biggr)\biggl(\sum_{j=1}^kz_ja_{lj} -z_{k+l}\biggr) + 
\sum_{l=1}^k\biggl(\frac{\lambda_l+i\mu_l}{2}\biggr)\biggl(\sum_{j=1}^k\bar{z}_j\bar{a}_{lj} -\bar{z}_{k+l}\biggr) =0.
$$
Since the matrix $\widehat{A}(z)$ is of rank $2k$ by Claim I,  this implies that $\lambda_l-i\mu_l=0$, and $\lambda_l+i\mu_l =0$ for all
 $l=1,\ldots , k$. This means that all the $\lambda_l$'s and the $\mu_l$'s vanish. This contradict the fact that some of  the real numbers $\lambda_l$'s and $\mu_l$'s are different than zero by hypothesis. Hence, the real rank of the matrix 
$\widetilde{A}(z)$ is  equal to $2k$.
\end{proof}

%\vspace{0.3cm}

\noindent {\it Proof of Proposition \ref{main prop}}.   Since the  $k$-dimensional linear space $\mathscr{P}$ is generic, the coefficients $b_j$ are different than zero, and  the system $(E)$ is consistent. Claim II shows that the  system $(E)$ has a unique solution for any $\Theta$   in the set of regular values of $co\mathscr{A}(\mathscr{P})$, which proves the first  and the second   statements of the proposition. 
The third statement comes from the fact that the set of $\Theta = (\theta_1,\ldots ,\theta_k,\psi_1 ,\ldots , \psi_k)$ for which the determinant of $\widetilde{B}(\Theta )$ vanishes is a hypersurface in the real torus and then its $2k$-volume  is zero. In other words, the  union
$
\bigcup_{s\in \mathbb{Z}_2^{2k}}Reg (co\mathscr{A}(\mathscr{P}_s))
$
is an open dense subset of the real torus $(S^1)^{2k}$. 

\hspace{12.3cm} $\Box$

\begin{corollary}\label{volume coameba}
The volume of the coamoeba of any generic $k$-dimensional linear space in $(\mathbb{C}^*)^{2k}$ is equal to $\pi^{2k}$.
\end{corollary}

\begin{proof}
By Proposition \ref{main prop} (iii), the volume of the disjoint  union 
\begin{displaymath}
\bigcup_{s\in \mathbb{Z}_2^{2k}}Reg (co\mathscr{A}(\mathscr{P}_s))
\end{displaymath}
is equal to the volume of 
all the real torus $(S^1)^{2k}$. Moreover, they have the same volume, because they are obtained from each other by translation (i.e., isometry of the real torus equipped with the flat metric). 
So, the volume of one of them must be equal to $(2\pi )^{2k}/ 2^{2k} = \pi^{2k}$.
\end{proof}

\noindent  We compute the cardinality of the inverse image under the logarithmic map of any regular value in the amoeba of a generic $k$-dimensional real very affine  linear space $\mathscr{P}(k)\subset (\mathbb{C}^*)^{2k}$.

\begin{proposition}\label{Pro NP1}
 Let $\mathscr{P}$ be a generic  real affine $k$-dimentional  linear subspace of $(\mathbb{C}^*)^{2k}$, and $x$ be a regular value 
of its amoeba. Then, the cardinality of $\Log^{-1}(x)$ is equal $2^k$.
\end{proposition}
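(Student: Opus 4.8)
We want to show that for a generic real affine linear space $\mathscr{P}(k)\subset(\mathbb{C}^*)^{2k}$ and a regular value $x$ of its amoeba, the fiber $\Log^{-1}(x)\cap\mathscr{P}(k)$ has cardinality exactly $2^k$. Let me sketch how I would prove this.

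The plan is to combine the two Madani–Nisse results (Lemma 6.1 and Proposition 6.2) with an upper‑bound argument that matches the lower bound they provide.

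$\mathbf{Step\ 1:\ Compute\ }l.$ First I would determine the dimension $l$ of $\mathscr{P}(k)\cap\overline{\mathscr{P}(k)}$ appearing in Proposition 6.2. Since $\mathscr{P}(k)$ is real (in the sense of Definition 4.1), after the normalizing translation of Remark 4.1 all the coefficients $a_{ji},b_j$ may be taken real, so the parametrization $\rho$ has real coefficients. Then $\overline{\rho(t)}=\rho(\overline{t})$, which forces $\overline{\mathscr{P}(k)}=\mathscr{P}(k)$, i.e. the space is invariant under complex conjugation. Hence $\mathscr{P}(k)\cap\overline{\mathscr{P}(k)}=\mathscr{P}(k)$ and its complex dimension is $l=k$. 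Proposition 6.2, applied with $n=2k$, then gives the lower bound: for any regular value $x$ the cardinality of $\Log^{-1}(x)$ is at least $2^{l}=2^{k}$.

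$\mathbf{Step\ 2:\ The\ matching\ upper\ bound.}$ The harder half is to show the fiber has \emph{at most} $2^k$ points. Here I would exploit the $\mathbb{Z}_2^{2k}$–symmetry used in Proposition \ref{main prop}. A regular value $x$ of the amoeba corresponds to prescribed moduli $(x_1,\dots,x_k,y_1,\dots,y_k)\in(\mathbb{R}_+^*)^{2k}$, and the preimages in $\mathscr{P}(k)$ are exactly the solutions $z=(x_1e^{i\theta_1},\dots,y_ke^{i\psi_k})$ of the system forcing $z\in\mathscr{P}(k)$ with those moduli. The key is that over a regular value each admissible sign pattern $s\in\mathbb{Z}_2^{2k}$ contributes, via the injectivity statement in Proposition \ref{main prop}(i), \emph{at most one} argument–vector, and the real genericity ties the $2k$ signs down to $k$ independent choices (the remaining $k$ are determined by the real relations $W_j+T_j=0$ of Lemma 3.1, since reality makes the dependent coordinates real‑linear functions whose signs are fixed once the independent ones are chosen). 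Counting the free sign choices gives exactly $2^k$ candidate points, hence $\#\Log^{-1}(x)\le 2^k$.

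$\mathbf{Step\ 3:\ Conclusion.}$ Combining the lower bound $\#\Log^{-1}(x)\ge 2^k$ from Step 1 with the upper bound $\#\Log^{-1}(x)\le 2^k$ from Step 2 yields equality, $\#\Log^{-1}(x)=2^k$, for every regular value $x$. The main obstacle I anticipate is Step 2: rigorously pinning down that exactly $k$ of the $2k$ sign degrees of freedom survive and that no two distinct sign patterns collapse to the same preimage. This requires checking that the real rank argument of Proposition \ref{main prop}—which guaranteed a \emph{unique} positive solution per sign pattern—indeed produces distinct points for distinct admissible signs, and that the reality constraint eliminates precisely half the $2^{2k}$ patterns. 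The injectivity and rank computations already established for the coamoeba in Proposition \ref{main prop} should transfer to this setting, so I expect the upper bound to follow once the bookkeeping of admissible sign patterns is made precise.
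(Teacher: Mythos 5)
Your Step 1 is sound and coincides with the paper: reality of $\mathscr{P}(k)$ gives $\overline{\mathscr{P}(k)}=\mathscr{P}(k)$, hence $l=k$ in Proposition 6.2 and the lower bound $\#\Log^{-1}(x)\geq 2^k$. The genuine gap is your Step 2, the upper bound, which you yourself flag as incomplete and which as written does not close. The sign patterns $s\in\mathbb{Z}_2^{2k}$ of Proposition 6.1 index $2^{2k}$ \emph{different} affine spaces $\mathscr{P}_s(k)$, and they are used there to tile the real torus by the regular parts of their coamoebas; you never set up a correspondence between these patterns and the points of the fiber $\Log^{-1}(x)$ inside the \emph{single} space $\mathscr{P}(k)$, so ``each admissible sign pattern contributes at most one argument--vector'' is not yet a statement about the fiber you are counting. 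The bookkeeping is also internally inconsistent: eliminating ``precisely half the $2^{2k}$ patterns'' leaves $2^{2k-1}$, not $2^k$, for $k\geq 2$; and the relation $W_j+T_j=0$ is derived in Section 4 only for lines ($k=1$), so invoking it to kill exactly $k$ of the $2k$ sign degrees of freedom in general is unsupported.

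The paper's upper bound goes by a different and more concrete route, which is in effect the rigorous version of your ``$k$ independent sign choices.'' Since the coefficient matrix $A=(a_{ji})$ is real and invertible, the map $A\times\mathrm{Id}$ carries $\mathscr{P}(k)$ onto the model space $\mathscr{L}(k)$ parametrized by $(T_1,\ldots,T_k)\mapsto(T_1,\ldots,T_k,\,b_1+T_1,\ldots,b_k+T_k)$, a product of $k$ plane lines, for which each regular value of the amoeba is visibly covered exactly $2^k$ times (one conjugation symmetry per factor). Because $A$ is real, it sends critical points of $\Log$ on $\mathscr{P}(k)$ to critical points of $\Log$ on $\mathscr{L}(k)$, so the complement of the critical locus in $\mathscr{P}(k)$ has the same number, $2^k$, of connected components as in the model; combined with the injectivity of the argument map on regular points (Proposition 6.1(i)) and the coincidence of the critical loci of $\Arg$ and $\Log$, this yields $\#\Log^{-1}(x)\leq 2^k$. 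If you want to rescue your approach, replace the $\mathbb{Z}_2^{2k}$ sign-pattern count by this reduction to a product of lines; otherwise Step 2 remains a plan rather than a proof.
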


\begin{proof}
  We  assume that $\mathscr{P}$ is given by a  parametrization $\rho$ as in $(4)$,  where all the coefficients are 
real numbers. 
 The matrix $A$ defined by:
\begin{displaymath}
{A} = \left( 
\begin{array}{ccc} 
a_{11} &\ldots& a_{1k}  \\
 \vdots & \ddots & \vdots \\
a_{k1} &\ldots& a_{kk}  
 \end{array} \right)
\end{displaymath}
is invertible, otherwise the image of $\rho$ is a linear space of dimension strictly less than $k$. The following diagram is
 commutative:
\begin{displaymath}
    \xymatrix{
(\mathbb{C}^*)^k\ar[r]^{\rho}\ar[d]_{A}&(\mathbb{C}^*)^{2k}\ar[d]^{A\times Id} \\
( \mathbb{C}^*)^k\ar[r]^{\rho'}&(\mathbb{C}^*)^{2k},    }
\end{displaymath}
where $\rho'$ is the parametrization given by:
\[
\begin{array}{ccccl}
\rho'&:&(\mathbb{C}^*)^k&\longrightarrow&(\mathbb{C}^*)^{2k}\\
&&(T_1,\ldots ,T_k)&\longmapsto&(T_1,\ldots ,T_k, b_1+T_1, \ldots , b_k+T_k).
\end{array}
\]
Each regular value of  the amoeba of the $k$-dimensional linear space $\mathscr{L} := \rho'(( \mathbb{C}^*)^k)$  is 
covered $2^k$ times under the logarithmic mapping. Indeed,   $\mathscr{L}$  is a product of  lines $L_1, \ldots ,L_k$ in $\mathbb{C}^2$.  The matrix $A$ is real,
 so the image of the set of  critical points of the logarithmic mapping restricted to $\mathscr{P}$ is the set of critical points of
 the logarithmic mapping restricted to $\mathscr{L}$. By Lemma \ref{Lem MN1}, if $z$ is a critical  in $\mathscr{P}$, then the tangent space to 
$\mathscr{L}og (\mathscr{P})$ at $\mathscr{L}og (z)$ contains at least one  purely imaginary vector $v$. Since $A$ is real, then the image of $v$ in the tangent space to $\mathscr{L}og  (\mathscr{L})$ at 
$\mathscr{L}og ((A\times Id) (z)$ is also purely imaginary tangent vector, and then,  the point $(A\times Id)(z)$ is critical.  Let $\Critp(\Log_{| \mathscr{P}})$ and $\Critp(\Log_{| \mathscr{L}})$
 be the set of critical points of the restriction of the logarithmic map to $\mathscr{P}$ and  $\mathscr{L}$ respectively. Since the volume of their amoebas is finite (see \cite{MN1-11}), 
this means that the set of critical values in their amoebas contains a subset of  dimension $2k-1$ (at least the topological boundary of the amoeba). 
Hence, the number of connected components of $\mathscr{P}\setminus \Critp(\Log_{| \mathscr{P}})$ is equal to the  number of connected 
components of $\mathscr{L}\setminus \Critp(\Log_{| \mathscr{L}})$. 
The fact
 that the set of critical points of the argument and the logarithmic maps coincide  (see   e.g., \cite{MN2-11}), 
and by Proposition \ref{main prop}, the restriction of the  
argument map to the set of  regular points  is injective,  then, the cardinality of $\Log^{-1}(x)$ is at most $2^k$. Since 
   $\mathscr{P}$ 
is real, then by Proposition \ref{Pro MN1},  for any regular value $x\in \mathscr{A}(\mathscr{P}(k))$, 
the cardinality of $\Log^{-1}(x)$ is at least $2^k$. Hence,   the cardinality of the inverse image of a regular value is equal to $2^k$. 
\end{proof}

%\vspace{0.1cm}

\noindent {\it Proof of Theorem \ref{main theorem6}}.  
The first statement of Theorem \ref{main theorem6} is  Corollary \ref{volume coameba}.
The second statement of Theorem \ref{main theorem6}  is because the cardinality  of the inverse image  of a regular value in the amoeba is constant and equal to $2^k$, and the map $\Log\circ\Arg^{-1}$ conserve the volume (see \cite{MN2-11}, Proposition 3,1).  Hence, the volume of the amoeba in this case is equal to the volume of the coamoeba divided  by $2^k$.

%%%%%%%%
%%%%%%%%
%%%%%%%%
%%%%%%%%

\end{document}